\documentclass[a4paper,12pt,reqno]{amsart}
\usepackage{comment}
\usepackage[utf8]{inputenc}
\usepackage{latexsym}
\usepackage{amssymb}
\usepackage{amsmath}
\usepackage{enumerate}
\usepackage{xcolor}
\usepackage{url}
\usepackage{mathrsfs}
\usepackage{ulem}
\newtheorem{theorem}{Theorem}[section]
\newtheorem{lemma}[theorem]{Lemma}

\newtheorem{corollary}[theorem]{Corollary}
\newtheorem{example}[theorem]{Example}

\newtheorem{definition}[theorem]{Definition}
\newtheorem{question}[theorem]{Question}

\def\C{\mathbb C}
\def\Z{\mathbb Z}
\def\D{\mathbb D}
\def\T{\mathbb T}
\def\pb{\mathcal{DPB}}

\newcommand\ke{\operatorname{ker}}

\title[]{
Spectral decomposition of doubly power-bounded elements 
in Banach algebras
}

\author{
Osamu Hatori
}
\address{
{corresponding author\\
{ORCID  0000-0002-4338-1355}\\
Niigata University, 950-2181, Niigata, Japan.}
}

\email{hatori@math.sc.niigata-u.ac.jp
}

\author{
Shiho Oi
}
\address{
Department of Mathematics, Faculty of Science, 
Niigata University, 950-2181, Niigata, Japan.
}
\email{shiho-oi@math.sc.niigata-u.ac.jp
}

\keywords{
doubly power-bounded elements, spectrum, spectral decompositions, 
a theorem of Gelfand, a theorem of Koehler and Rosenthal
}
\subjclass[2020]{
47A10,47B06, 46B04
}

\begin{document}

\begin{abstract}
We establish a characterization of doubly power-bounded elements with finite spectrum in Banach algebras. In particular, we present a spectral decomposition for such elements, extending a classical theorem of Gelfand concerning doubly power-bounded elements with singleton spectrum. Furthermore, we generalize a theorem of Koehler and Rosenthal for doubly power-bounded elements to the setting of Banach algebras.  
In the final section, we are initiating a study to investigate whether the properties of doubly power-bounded elements can offer insight into the commutativity of Banach algebras.
\end{abstract}

\maketitle
\section{Introduction}
We investigate doubly power-bounded elements in complex Banach algebras with finite spectrum, continuing the line of study initiated in our previous paper \cite{oj1} on doubly power-bounded operators (we called just power-bounded operators in \cite{oj1}) on Banach spaces. This problem has its roots in a classical theorem by Gelfand \cite{gelfand}, which characterizes the identity operator as the doubly power-bounded operator whose spectrum is $\{1\}$. A detailed and insightful account of the developments related to this result is provided by Zem\'anek \cite{zemanek}.

Throughout this paper, unless otherwise stated, a Banach algebra is a complex one and 
$B$ denotes a unital Banach algebra with the unit element 
$e$. For an element 
$a\in B$, 
its spectrum is denoted by 
$\sigma_B(a)$, or simply 
$\sigma(a)$
 when the context is clear. 
 An idempotent in a Banach algebra is an element $a$ with $a^2=a$. 
 Given a complex {{(resp. real)}} Banach space 
$X$, we denote by 
$\mathfrak{B}(X)$ the {{complex (resp. real)}} Banach algebra of all bounded {{complex (resp. real)}} linear operators on 
$X$.
The identity operator on a Banach space (or, more generally, on a linear space) is denoted by $I$.

An invertible element 
$x\in B$ is said to be doubly power-bounded if
\begin{equation}\label{power}
\sup_{n\in \mathbb{Z}} \|x^n\| < \infty,
\end{equation}
where 
$\Z$ denotes the set of all integers. When 
$B={\mathfrak{B}}(X)$ {{for a complex Banach space $X$}}, doubly power-bounded elements are referred to as doubly power-bounded operators. In particular, every surjective complex-linear isometry on a complex Banach space is a doubly power-bounded operator.

In \cite{oj1}, we studied doubly power-bounded operators on arbitrary complex Banach spaces with isolated spectra. There, we proved a result of Koehler and Rosenthal for doubly power-bounded operators, showing that if a doubly power-bounded operator has an isolated point in its spectrum, then that point must be an eigenvalue, and its corresponding eigenspace has a complemented subspace. As a corollary, we obtained a spectral decomposition for doubly power-bounded operators with finite spectrum.

After revisiting a theorem of Ilis\v evi\'c about algebraic operators on Banach spaces in section \ref{s3}, we extend the above results to the more general setting of doubly power-bounded elements in Banach algebras in sections \ref{s4} and \ref{s5}. Our main result, Theorem \ref{3.2}, generalizes a classical theorem of Gelfand \cite[Satz 1]{gelfand}; see also \cite[Theorem 2.3, Corollary 3.3]{oj1}. 
In section \ref{s7}, we pose  Question \ref{7.2} concerning the commutativity of Banach algebras in terms of doubly power-bounded elements.

We denote the complex plane by $\C$. The unit circle in $\C$ is denoted by $\T$.
We apply the following two lemmata several times in the paper.
The first one is exhibited in \cite[Lemma 2.2]{oj1}. 
\begin{lemma}\label{2.2}
    Suppose that an invertible element $b\in B$ is doubly power-bounded.
Then $\sigma(b)\subset \T$. 
\end{lemma}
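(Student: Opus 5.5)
The plan is to use the spectral radius formula for both $b$ and $b^{-1}$. Set $M=\sup_{n\in\Z}\|b^n\|<\infty$.

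First we bound the spectrum of $b$ from outside. By Gelfand's spectral radius formula,
\[
r(b)=\lim_{n\to\infty}\|b^n\|^{1/n}\le \lim_{n\to\infty} M^{1/n}=1,
\]
so every $\lambda\in\sigma(b)$ satisfies $|\lambda|\le 1$. Here $M\ge\|e\|>0$, so $M^{1/n}\to1$. The same argument applied to $b^{-1}$, whose positive powers $b^{-n}$ are also bounded by $M$, gives $r(b^{-1})\le 1$.

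Next we bound the spectrum from inside. Since $b$ is invertible, $0\notin\sigma(b)$, and we use the standard identity $\sigma(b^{-1})=\{\lambda^{-1}:\lambda\in\sigma(b)\}$. It follows from the factorization
\[
b^{-1}-\lambda^{-1}e=-\lambda^{-1}b^{-1}(b-\lambda e),
\]
in which $b^{-1}$ commutes with $b-\lambda e$, so one side is invertible exactly when the other is. Hence for $\lambda\in\sigma(b)$ we get $|\lambda^{-1}|\le r(b^{-1})\le1$, that is, $|\lambda|\ge1$.

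Combining the two bounds gives $|\lambda|=1$ for every $\lambda\in\sigma(b)$, so $\sigma(b)\subset\T$. There is no real obstacle: the only points needing care are the inversion identity for spectra, which is routine, and the fact that $M>0$.
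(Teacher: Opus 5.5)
Your proof is correct: Gelfand's formula applied to $b$ and to $b^{-1}$ gives $r(b)\le 1$ and $r(b^{-1})\le 1$, and the inversion identity $\sigma(b^{-1})=\{\lambda^{-1}:\lambda\in\sigma(b)\}$ then forces $|\lambda|=1$ on $\sigma(b)$. The paper gives no proof of its own here, only the citation \cite[Lemma 2.2]{oj1}, and your argument is the standard proof of this fact; incidentally, the commutativity you invoke in the factorization is not needed, since whenever $x$ is invertible, $xy$ is invertible if and only if $y$ is.
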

The next well known result appears, for example, in \cite[Theorem 1.2.8]{M}
\begin{lemma}\label{2.3} 
    Let $B_1$ be a closed subalgebra of $B$ which contains the unit of $B$. 
    Then $\partial \sigma_{B_1}(a)\subset \sigma_B(a)$, where $\partial \sigma_{B_1}(a)$ is the boundary of $\sigma_{B_1}(a)$.
\end{lemma}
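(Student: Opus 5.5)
The statement is the classical fact that for a closed unital subalgebra $B_1\subset B$ and $a\in B_1$, one has $\partial\sigma_{B_1}(a)\subset\sigma_B(a)$. The plan is to argue by contradiction. Suppose $\lambda\in\partial\sigma_{B_1}(a)$ but $\lambda\notin\sigma_B(a)$. Because $\sigma_{B_1}(a)$ is closed, $\lambda\in\sigma_{B_1}(a)$, so $a-\lambda e$ is not invertible in $B_1$. At the same time it is invertible in $B$. On the other hand, $\lambda$ is a boundary point, so there is a sequence $\lambda_n\notin\sigma_{B_1}(a)$ with $\lambda_n\to\lambda$. For each $n$ the inverse $(a-\lambda_n e)^{-1}$ exists in $B_1$. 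By uniqueness of inverses in $B$, this element coincides with the inverse of $a-\lambda_n e$ computed in $B$.

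Next I will use continuity of inversion in $B$. The set of invertible elements of $B$ is open and inversion is continuous on it. Since $a-\lambda_n e\to a-\lambda e$ in norm and the limit is invertible in $B$, we get
\[
(a-\lambda_n e)^{-1}\to (a-\lambda e)^{-1}
\]
in $B$. The left-hand side is a sequence in $B_1$, and $B_1$ is closed, so $(a-\lambda e)^{-1}\in B_1$. Hence $a-\lambda e$ is invertible in $B_1$, which contradicts $\lambda\in\sigma_{B_1}(a)$.

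The only step that needs any care is to make sure the continuity of inversion is applied in $B$, where the limit point is known to be invertible, and not in $B_1$. An alternative route avoids continuity of inversion altogether. One uses the resolvent bound $\|(a-\mu e)^{-1}\|\ge 1/\operatorname{dist}(\mu,\sigma_{B_1}(a))$ computed in $B_1$. Since the inverses in $B_1$ and in $B$ agree, this norm blows up as $\mu=\lambda_n\to\lambda$. That contradicts the boundedness of the $B$-resolvent near $\lambda$, which holds because $\lambda\notin\sigma_B(a)$. Either route is routine, and I expect no real obstacle.
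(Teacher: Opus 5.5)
Your proof is correct and is the standard argument behind the source the paper cites, \cite[Theorem 1.2.8]{M}. The paper gives no proof of its own, and that source argues as you do: invertibility in $B_1$ is preserved under limits of elements of $B_1$ whose limit is invertible in $B$, by continuity of inversion in $B$ and closedness of $B_1$. Your alternative resolvent-norm argument is also valid.
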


\section{A theorem of Ilis\v evi\'c revisited}\label{s3}
Ilis\v evi\'c \cite[Proposition 2.4]{ilisevic} established a necessary and sufficient condition for a linear operator on a Banach space to be algebraic. It is worth noting that this result holds more generally for
linear operators on arbitrary vector spaces, whether real or complex ones, not just Banach spaces. In this paper, we provide a precise and detailed proof of this result. Furthermore, we extend their theorem to the broader context of Banach algebras.

The following result, including its proof, appears essentially in \cite[Theorem 5.-9D]{tay}. Throughout, the kernel of an operator is denoted by $\ke(\cdot)$. 
\begin{theorem}\label{1.1}
    Suppose that $L$ is a complex (resp. real) linear space. 
    Let $P_1,\dots, P_n$ be polynomials with complex (resp. real) coefficients. Suppose that any of the two $P_i$ and $P_j$ ($i\ne j$) are coprime. 
    Let $T\colon L\to L$ be a complex (resp. real) linear map. Then we have
    \begin{equation}\label{oplus}
    \ke P(T)=\oplus_{j=1}^n\ke P_j(T)
    \end{equation}
    for $P=\prod_{j=1}^nP_j$.
\end{theorem}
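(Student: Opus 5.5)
The plan is to reduce the statement to the coprime case via Bézout's identity in the polynomial ring (over $\C$ or over $\mathbb R$, according to the case), since that ring is a principal ideal domain. The basic fact is that polynomials in $T$ commute with one another and with $T$. Consequently each kernel $\ke Q(T)$ is invariant under every $R(T)$, and $(QR)(T)=Q(T)R(T)$.

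I will proceed by induction on $n$; the case $n=1$ is trivial. The key step is the two-factor case. Let $A$ and $C$ be coprime, and choose polynomials $U,V$ with $UA+VC=1$. Evaluating at $T$ gives
\begin{equation*}
U(T)A(T)+V(T)C(T)=I .
\end{equation*}
\emph{Trivial intersection.} If $x\in\ke A(T)\cap\ke C(T)$, then $x=U(T)A(T)x+V(T)C(T)x=0$.

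\emph{Inclusion of the kernels.} Since $A(T)$ and $C(T)$ commute, both $\ke A(T)$ and $\ke C(T)$ are contained in $\ke (AC)(T)$.

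\emph{Splitting.} Let $x\in\ke (AC)(T)$ and write $x=x_1+x_2$ with $x_1=V(T)C(T)x$ and $x_2=U(T)A(T)x$. Then
\begin{equation*}
A(T)x_1=V(T)(AC)(T)x=0 .
\end{equation*}
By commutativity, likewise $C(T)x_2=U(T)(AC)(T)x=0$. Hence $\ke(AC)(T)=\ke A(T)\oplus\ke C(T)$.

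For the induction step, set $A=\prod_{j=1}^{n-1}P_j$ and $C=P_n$. I need $A$ and $C$ to be coprime. This holds because any irreducible common factor of $A$ and $P_n$ would divide some $P_j$ with $j<n$, since irreducibles are prime in a PID, and that contradicts the pairwise coprimality of $P_j$ and $P_n$. Applying the two-factor case and then the induction hypothesis to $\ke A(T)$ gives
\begin{equation*}
\ke P(T)=\Big(\oplus_{j=1}^{n-1}\ke P_j(T)\Big)\oplus \ke P_n(T).
\end{equation*}
An iterated internal direct sum of this form is the full direct sum, meaning every element decomposes uniquely. This yields \eqref{oplus}.

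The main point needing care is the coprimality of $P_n$ with the product of the others. It is handled as above, or directly: from $U_jP_j+V_jP_n=1$ for each $j<n$, multiplying these identities shows that $\prod_{j<n} P_j$ and $P_n$ generate the unit ideal. Everything else is linear algebra valid in any complex or real linear space, with no topology needed.
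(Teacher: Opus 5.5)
Your proposal is correct and follows essentially the same route as the paper. The paper also uses Bézout's identity for the two-factor case, with the same splitting $x=Q_2(T)P_2(T)x+Q_1(T)P_1(T)x$, and then inducts using that $\prod_{j<n}P_j$ and $P_n$ are coprime. You additionally justify that coprimality and the inclusion $\ke A(T),\ke C(T)\subset \ke (AC)(T)$, both of which the paper leaves implicit.
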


\begin{proof}
    Put $M=\ke P(T)$ and $M_j=\ke P_j(T)$, $1\le j\le n$. We prove the result by induction. 
    
    First, we prove it when $n=2$. In this case, there are polynomials $Q_1$ and $Q_2$ such that 
    \[
    Q_1P_1+Q_2P_2=1
    \]
    since $P_1$ and $P_2$ are coprime. Then we have
    \begin{equation}\label{1}
    Q_1(T)P_1(T)+Q_2(T)P_2(T)=I
    \end{equation}
    and
    \begin{multline}\label{2}
    Q_1(T)P_1(T)^2+Q_2(T)P_1(T)P_2(T)\\
    =Q_1(T)P_1(T)^2+Q_2(T)P(T)=P_1(T).
    \end{multline}
    We prove $M_1\cap M_2=\{0\}$. Suppose that $x\in M_1\cap M_2$. Then by \eqref{1} we have
    \[
    x=Q_1(T)P_1(T)x+Q_2(T)P_2(T)x=0
    \]
    since $P_1(T)x=P_2(T)x=0$. Thus $M_1\cap M_2=\{0\}$.  Next, we prove that $M=M_1\oplus M_2$. Suppose that $x\in M$. Put $y=Q_1(T)P_1(T)x$. Then by \eqref{2} we have
    \[
    P_1(T)x=P_1(T)Q_1(T)P_1(T)x+Q_2(T)P(T)x=P_1(T)y.
    \]
    Thus $x-y\in M_1$. We have
    \[
    P_2(T)y=P_2(T)Q_1(T)P_1(T)x=Q_1(T)P(T)x=0,
    \]
    hence $y\in M_2$. It follows that $x= x-y+y\in M_1 + M_2$. As $x$ is arbitrary, we infer that 
    $M=M_1+M_2$. As $M_1\cap M_2=\{0\}$, we see that $M=M_1\oplus M_2$. 

    Suppose that \eqref{oplus} holds for $n=k$. We prove \eqref{oplus} for $n=k+1$. As $P_i$ and $P_j$ are coprime for every $i\ne j$, we infer that $\prod_{j=1}^kP_j$ and $P_{k+1}$ are coprime. Then, by the first part and the assumption of induction, we have 
    \[
    \ke \prod_{j=1}^{k+1}P_j(T)=\ke \prod_{j=1}^kP_j(T)\oplus \ke 
    P_{k+1}(T)=\oplus_{j=1}^{k+1}\ke P_j(T).
    \]
    By induction, we have \eqref{oplus} for $n=k+1$.
\end{proof}
The following theorem corresponds to the case of linear operators on linear spaces in a theorem of Ilis\v evi\'c \cite[Proposition 2.4] {ilisevic} on bounded linear operators on Banach spaces. 
We define $\delta_{ij}=1$ if $i=j$, and $\delta_{ij}=0$ otherwise.
\begin{theorem}\label{1.2}
Suppose that $L$ is a complex (resp. real) linear space and $T\colon L\to L$ is a complex (resp. real) linear map.
Let $\lambda_1,\dots, \lambda_n$ be distinct complex (resp. real) numbers and $p_1,\dots, p_n$ be  complex (resp. real) linear operators on $L$ such that $p_ip_j=\delta_{ij}p_i$ for every $1\le i,j\le n$ and $\sum_{j=1}^np_j=I$. 
Then the following {\rm{(i)}} and {\rm{(ii)}} are equivalent:
\begin{itemize}
    \item[(i)] $T=\sum_{j=1}^n\lambda_jp_j$,
    \item[(ii)] $\prod_{j=1}^n(T-\lambda_jI)=0$ and $p_i=\prod_{j=1,j\ne i}^n\frac{T-\lambda_jI}{\lambda_i-\lambda_j}$ for every $1\le i\le n$ if $n>1$ and $p_1=I$ if $n=1$.
\end{itemize}
{{
In this case, $\sigma(T)\subset \{\lambda_1,\dots,\lambda_n\}$. 
In particular, if any $p_1,\dots,p_n$ is non-zero, then 
$\sigma(T)=\{\lambda_1,\dots, \lambda_n\}$.
}}
\end{theorem}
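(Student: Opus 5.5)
For (i)$\Rightarrow$(ii) I would compute directly with the relations $p_ip_j=\delta_{ij}p_i$ and $\sum_j p_j=I$. Writing $I=\sum_k p_k$, we get $T-\lambda_jI=\sum_{k=1}^n(\lambda_k-\lambda_j)p_k$. Since the $p_k$ are mutually orthogonal idempotents that commute with each other, products multiply coefficientwise:
\[
\prod_{j\in S}(T-\lambda_jI)=\sum_{k=1}^n\Bigl(\prod_{j\in S}(\lambda_k-\lambda_j)\Bigr)p_k
\]
for any index set $S$. Taking $S=\{1,\dots,n\}$, every coefficient contains the factor $\lambda_k-\lambda_k=0$, so $\prod_{j=1}^n(T-\lambda_jI)=0$. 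Taking $S=\{1,\dots,n\}\setminus\{i\}$ and dividing by $\prod_{j\ne i}(\lambda_i-\lambda_j)$, the coefficient of $p_k$ vanishes for $k\ne i$ and equals $1$ for $k=i$. This yields the formula for $p_i$. The case $n=1$ is immediate, since then $p_1=I$ and $T=\lambda_1I$.

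For (ii)$\Rightarrow$(i) I would use the Lagrange interpolation identities. The polynomials $L_i(z)=\prod_{j\ne i}\frac{z-\lambda_j}{\lambda_i-\lambda_j}$ satisfy $\sum_i L_i(z)=1$ and $\sum_i\lambda_iL_i(z)=z$ as polynomial identities: both sides have degree less than $n$ and agree at the $n$ distinct points $\lambda_k$. Substituting $T$ gives $\sum_i\lambda_ip_i=\sum_i\lambda_iL_i(T)=T$. For $n=1$, the hypothesis $T-\lambda_1I=0$ gives $T=\lambda_1I=\lambda_1p_1$ directly. In this direction the relations $p_ip_j=\delta_{ij}p_i$ are not even needed, and the condition $\prod_j(T-\lambda_jI)=0$ is needed only when $n=1$.

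For the spectral claim I would take $\sigma(T)$ to mean the set of $\lambda$ for which $T-\lambda I$ is not bijective on $L$, that is, has no linear inverse. If $\lambda\notin\{\lambda_1,\dots,\lambda_n\}$, the inverse is $\sum_k(\lambda_k-\lambda)^{-1}p_k$, as the coefficientwise multiplication above shows. Conversely, if $p_i\ne0$, pick $x=p_iy\ne0$. Then $Tx=\sum_k\lambda_kp_kp_iy=\lambda_ix$, so $\lambda_i$ is an eigenvalue and $\lambda_i\in\sigma(T)$. In the Banach setting the inverse above is bounded, being a finite combination of bounded operators, so the same argument applies there.

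The only point needing care is that the $p_k$ commute and multiply orthogonally, which is what justifies the coefficientwise product formula; the rest is routine. In the real case everything is identical, since all coefficients involved are real.
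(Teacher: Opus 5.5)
Your proof is correct. The direction (i)$\Rightarrow$(ii) and the inclusion $\sigma(T)\subset\{\lambda_1,\dots,\lambda_n\}$ match the paper's computations: coefficientwise multiplication against the orthogonal idempotents, and the explicit inverse $\sum_k(\lambda_k-\mu)^{-1}p_k$.

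You differ from the paper in (ii)$\Rightarrow$(i). The paper combines $\prod_j(T-\lambda_jI)=0$ with Theorem~\ref{1.1} to split $L=\bigoplus_j\ker(T-\lambda_jI)$. It defines $q_j$ as the coordinate projections of this decomposition and notes $T=\sum_j\lambda_jq_j$. It then applies (i)$\Rightarrow$(ii) to the $q_j$ to get $q_i=\prod_{j\ne i}\frac{T-\lambda_jI}{\lambda_i-\lambda_j}=p_i$. Your Lagrange identity $\sum_i\lambda_iL_i(z)=z$ reaches the same conclusion in one line. It also shows that for $n>1$ the condition $\prod_j(T-\lambda_jI)=0$ in (ii) is redundant under the standing hypotheses, since it is recovered from (i).

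The paper's longer route buys structural information your argument does not record. It gives the eigenspace decomposition of $L$ and identifies $p_j$ as the projection onto $\ker(T-\lambda_jI)$ along the other eigenspaces. That is closer to the spirit of Ilis\v{e}vi\'c's characterization of algebraic operators.

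For the reverse spectral inclusion, the paper argues by contradiction. If $T-\lambda_kI$ were invertible, cancelling it from $\prod_j(T-\lambda_jI)=0$ would force $p_k=0$. You instead exhibit the eigenvector $p_iy$, which is more direct and shows each $\lambda_i$ is in fact an eigenvalue.

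One small point on your Banach-space remark: boundedness of the $p_k$ is not assumed there. It does follow from $p_k=L_k(T)$, so your argument goes through.
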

\begin{proof}
We only give a proof for $n>1$. The case where $n=1$ is trivial.
    (i)$\Rightarrow$(ii). For every $1\le j\le n$, we have
    \[
    T-\lambda_jI=\sum_{i=1}^n\lambda_ip_i-\lambda_j\sum_{i=1}^np_i=\sum_{i=1}^n(\lambda_i-\lambda_j)p_i
    =\sum_{i=1,i\ne j}^n(\lambda_i-\lambda_j)p_i.
    \]
    As $p_ip_j=\delta_{ij}p_i$ for every $1\le i,j\le n$, $\prod_{j=1}^n(T-\lambda_jI)$ is a linear combination of $p_1,\dots, p_n$. For any $1\le k\le n$, the coefficient of $p_k$ for $\prod_{j=1}^n(T-\lambda_jI)$ is nonzero only if the coefficient of $p_k$ in $T-\lambda_j$ for every $1\le j\le n$ is nonzero, which is impossible because the coefficient of $p_k$ for $T-\lambda_k I$ is zero.
    Hence 
    \[
    \prod_{j=1}^n(T-\lambda_jI)=0.
    \]
    For every $1\le i\le n$, we also have
    \[
    \prod_{j=1,j\ne i}^n(T-\lambda_jI)=\prod_{j=1,j\ne i}^n(\lambda_i-\lambda_j)p_i.
    \]
    Therefore we have 
    \[
    p_i=\prod_{j=1,j\ne i}^n\frac{T-\lambda_jI}{\lambda_i-\lambda_j}
    \]
    for every $1\le i\le n$.

    (ii)$\Rightarrow$(i). Letting $P(\lambda)=\prod_{j=1}^n(\lambda-\lambda_j)$, we have $P(T)=0$ by the first equality of (ii). 
    As $\lambda_1,\dots,\lambda_n$ are distinct numbers, monomials $\lambda-\lambda_i$ and $\lambda-\lambda_j$ are coprime for each $i\ne j$. 
    Then by Theorem \ref{1.1}, we have
    \[
    L=\ke P(T)=\oplus_{j=1}^n\ke (T-\lambda_jI).  
    \]
    Let $x\in L$. Then we have the expression $x=\sum_{j=1}^nx_j$, where $x_j\in \ke (T-\lambda_jI)$. The expression is unique since 
    $\ke (T-\lambda_jI)\cap \ke (T-\lambda_iI)=\{0\}$ for each $i\ne j$. Put $q_j\colon L\to L$ by $q_j(x)=x_j, x\in L$, where $x=\sum_{j=1}^nx_j$ for $x_j\in \ke (T-\lambda_jI)$. Then 
    \[
    \sum_{j=1}^n(T-\lambda_jI)q_j=0.
    \]
    Hence $T\sum_{j=1}^nq_j=\sum_{j=1}^n\lambda_jq_j$. As $\sum_{j=1}^nq_j=I$, we infer that 
    \[
    T=\sum_{j=1}^n\lambda_jq_j.
    \]
    By (i)$\Rightarrow$(ii), we have
    \[
    q_i=\prod_{j=1,j\ne i}^n\frac{T-\lambda_jI}{\lambda_i-\lambda_j}
    \]
    for every $1\le i\le n$. Hence,  $q_i=p_i$ for every $1\le i\le n$. 
    
Suppose that $T=\sum_{j=1}^n\lambda_jp_j$. 
We prove $\sigma(T)\subset \{\lambda_1,\dots,\lambda_n\}$. Let $\mu\in \C\setminus \{\lambda_1,\dots, \lambda_n\}$ be arbitrary. Then $T-\mu I=\sum_{j=1}^n(\lambda_j-\mu)p_j$ since $\sum_{j=1}^np_j=I$. As $\lambda_j-\mu\ne 0$ for every $j=1,\dots, n$, 
$\sum_{j=1}^n\frac{1}{\lambda_j-\mu}p_j$ is well defined and 
\[
(T-\mu I)\sum_{j=1}^n\frac{1}{\lambda_j-\mu}p_j=
(\sum_{j=1}^n\frac{1}{\lambda_j-\mu}p_j)(T-\mu I)=I,
\]
since $p_ip_j=\delta_{ij}p_i$ for every $1\le i,j\le n$; $\mu\not\in \sigma (T)$. As $\mu\in \C\setminus \{\lambda_1,\dots,\lambda_m\}$ is arbitrary,  we have 
$\sigma(T)\subset \{\lambda_1,\dots, \lambda_n\}$. 
Suppose further that any $p_1,\dots, p_n$ is non-zero. We prove $\{\lambda_1,\dots, \lambda_n\}\subset \sigma(T)$. Suppose that $\lambda_k\not\in \sigma(T)$ for some $1\le k\le n$; $T-\lambda_kI$ is invertible. Then 
$\prod_{j=1,j\ne k}^n(T-\lambda_j I)=0$ due to the first equality of (ii). Then by the second one, we have 
\[
p_k=\prod_{j=1,j\ne k}^n\frac{T-\lambda_jI}{\lambda_k-\lambda_j}=0,
\]
which is a contradiction. Hence, $\{\lambda_1,\dots, \lambda_n\}\subset \sigma(T)$. We conclude that  $\{\lambda_1,\dots, \lambda_n\}= \sigma(T)$.
\end{proof}
As a straightforward application of Theorem \ref{1.2}, we recover the following result of Ilis\v evi\'c \cite[Proposition 2.4]{ilisevic}.
\begin{corollary}\label{1.3}
    Suppose that $X$ is a complex (resp. real) Banach space and $T\colon X\to X$ is a bounded complex (resp. real) linear operator. 
    Let $\lambda_1,\dots, \lambda_n$ be distinct complex (resp. real) numbers and $p_1,\dots, p_n$ be a complex (resp. real) linear operator on $L$ such that $p_ip_j=\delta_{ij}p_i$ for every $1\le i,j\le n$ and $\sum_{j=1}^np_j=I$. 
Then the following {\rm{(i)}} and {\rm{(ii)}} are equivalent:
\begin{itemize}
    \item[(i)] $T=\sum_{j=1}^n\lambda_jp_j$,
    \item[(ii)] $\prod_{j=1}^n(T-\lambda_jI)=0$ and $p_i=\prod_{j\ne i}\frac{T-\lambda_jI}{\lambda_i-\lambda_j}$ for every $1\le i\le n$ if $n>1$ and $p_1=I$ if $n=1$.
\end{itemize}
In this case, $\sigma(T)\subset \{\lambda_1,\dots,\lambda_n\}$. 
In particular, if any $p_1,\dots,p_n$ is non-zero, then 
$\sigma(T)=\{\lambda_1,\dots, \lambda_n\}$.
\end{corollary}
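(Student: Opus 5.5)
The plan is to deduce Corollary \ref{1.3} directly from Theorem \ref{1.2}, taking $L=X$ regarded only as a complex (resp. real) linear space and forgetting the norm. The statement contains three things: the equivalence of (i) and (ii), the inclusion $\sigma(T)\subset\{\lambda_1,\dots,\lambda_n\}$, and the equality of spectra when every $p_j$ is non-zero. I will handle them in that order.

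For the equivalence, the hypotheses $p_ip_j=\delta_{ij}p_i$ and $\sum_j p_j=I$ are purely algebraic. Conditions (i) and (ii) are identities among linear maps. So Theorem \ref{1.2} applies verbatim and gives (i)$\Leftrightarrow$(ii), with no boundedness needed for this step.

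For the spectral statements, the spectrum of $T$ is now taken in the Banach algebra $\mathfrak{B}(X)$, not in the algebra of all linear maps on $X$. I first observe that, in case (ii), each $p_i$ is a polynomial in $T$. Since $T$ is bounded, each $p_i$ is therefore bounded. (If the $p_i$ are assumed bounded from the start, this is automatic anyway.)

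Now take $\mu\notin\{\lambda_1,\dots,\lambda_n\}$. The inverse $\sum_j(\lambda_j-\mu)^{-1}p_j$ of $T-\mu I$ constructed in the proof of Theorem \ref{1.2} is a finite combination of bounded operators. Hence it lies in $\mathfrak{B}(X)$, and $\mu\notin\sigma(T)$.

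For the reverse inclusion when all $p_k\neq0$, suppose $T-\lambda_kI$ were invertible in $\mathfrak{B}(X)$. Then it is in particular bijective. The relation $\prod_j(T-\lambda_jI)=0$ then forces $\prod_{j\ne k}(T-\lambda_jI)=0$, and hence $p_k=0$ by (ii). This is a contradiction, so $\lambda_k\in\sigma(T)$.

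The only genuine point to check is this passage from algebraic invertibility to invertibility in $\mathfrak{B}(X)$, that is, that the inverses involved are bounded. The explicit formula settles it, because the $p_j$ are polynomials in $T$. Alternatively, the bounded inverse theorem would settle it. Everything else is a literal transcription of Theorem \ref{1.2}.
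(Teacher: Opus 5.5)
Your proposal is correct and follows the paper's route: the paper obtains the corollary as a direct application of Theorem \ref{1.2} with $L=X$. It also remarks that the $p_j$ are automatically bounded by (ii), since they are polynomials in $T$. Your explicit check that the inverse $\sum_j(\lambda_j-\mu)^{-1}p_j$ lies in $\mathfrak{B}(X)$ (or, alternatively, the appeal to the bounded inverse theorem) spells out the passage from algebraic to $\mathfrak{B}(X)$-spectrum that the paper leaves implicit.
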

Note that although boundedness of the operators $p_1,\dots,p_n$ in Corollary \ref{1.3} is not assumed explicitly, it automatically follows from condition (ii).
\begin{definition}
{{Let $A$ be a unital complex (resp. real) Banach algebra.}}
  For any $x\in A$, the multiplication operator $S_x\colon A\to A$ is given by $S_x(y)=xy$ for $y\in A$. Define $S\colon A \to \mathfrak{B}(A)$ by $S(x)=S_x$ for $x\in A$.  
\end{definition}
\begin{lemma}\label{2.1}
{{Let $A$ be a unital complex (resp. real)  Banach algebra. }} Then $S\colon A\to \mathfrak{B}(A)$ gives an isometrical algebra isomorphism from $A$ onto $S_A=\{S_x\colon x\in A\}$. 
\end{lemma}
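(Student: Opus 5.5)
We verify the claims about $S$ directly from the Banach algebra axioms: linearity, multiplicativity, unitality, and isometry. Once isometry is established, the rest of Lemma \ref{2.1} follows formally.

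\emph{Algebraic part.} For each $x\in A$, the map $S_x$ is linear, and it is bounded because
\[
\|S_x(y)\|=\|xy\|\le \|x\|\,\|y\|,
\]
so $\|S_x\|\le \|x\|$. The map $x\mapsto S_x$ is linear, since $S_{\alpha x+\beta z}(y)=\alpha xy+\beta zy$. It is multiplicative, since $S_{xz}(y)=xzy=S_x(S_z(y))$ by associativity. It is unital, since $S_e=I$. Hence $S_A$ is a subalgebra of $\mathfrak{B}(A)$ containing $I$, and $S$ is an algebra homomorphism of $A$ onto $S_A$.

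\emph{Isometry.} Evaluating at the unit gives $S_x(e)=x$, so
\[
\|x\|=\|S_x(e)\|\le \|S_x\|\,\|e\|.
\]
The only point needing care is the normalization $\|e\|=1$, which is the standard convention for a unital Banach algebra and which we assume here. It then gives $\|x\|\le\|S_x\|$. Combined with $\|S_x\|\le\|x\|$, we get $\|S_x\|=\|x\|$.

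\emph{Conclusion.} Isometry forces injectivity, and $S$ is onto $S_A$ by definition. Thus $S$ is an isometric algebra isomorphism onto $S_A$. Moreover, $S_A$ is complete, being an isometric image of the Banach space $A$, so it is a closed unital subalgebra of $\mathfrak{B}(A)$. This is the form needed later to combine Lemma \ref{2.1} with Lemma \ref{2.3} and Corollary \ref{1.3}. The argument is identical in the real case.
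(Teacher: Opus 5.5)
Your argument is correct and is the routine verification that the paper omits: submultiplicativity gives $\|S_x\|\le\|x\|$, evaluating at $e$ gives the reverse inequality, and completeness of the isometric image makes $S_A$ a closed unital subalgebra. You are right to flag the normalization $\|e\|=1$, which the paper uses tacitly and which the isometry claim needs, since $\|S_e\|=\|I\|=1$ while $\|e\|$ could otherwise exceed $1$.
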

The proof is routine and is omitted. 
Note that $S_A=\{S_x\colon x\in A\}$ is a unital closed subalgebra of $\mathfrak{B}(A)$, where $S_e=I$. 

We present a theorem of Illis\v evi\'c in the context of Banach algebras.
\begin{corollary}\label{3.4}
    Suppose that $A$ is a unital complex (resp. real) Banach algebra and $b\in A$.
    Let $\lambda_1,\dots, \lambda_n$ be distinct complex (resp. real) numbers and $p_1,\dots, p_n\in A$ satisfy $p_ip_j=\delta_{ij}p_i$ for every $1\le i,j\le n$ and $\sum_{j=1}^np_j=e$. 
    Then the following {\rm{(i)}} and {\rm{(ii)}} are equivalent:
\begin{itemize}
    \item[(i)] $b=\sum_{j=1}^n\lambda_jp_j$,
    \item[(ii)] $\prod_{j=1}^n(b-\lambda_je)=0$ and $p_i=\prod_{j\ne i}\frac{b-\lambda_je}{\lambda_i-\lambda_j}$ for every $1\le i\le n$ if $n>1$ and $p_1=e$ if $n=1$.
\end{itemize}
In this case, $\sigma(b)\subset \{\lambda_1,\dots,\lambda_n\}$. 
In particular, if any $p_1,\dots,p_n$ is non-zero, then 
$\sigma(b)=\{\lambda_1,\dots, \lambda_n\}$.
\end{corollary}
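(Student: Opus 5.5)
We transfer the problem to operators via the left regular representation of Lemma \ref{2.1}, apply Theorem \ref{1.2} (equivalently Corollary \ref{1.3}) with $L=A$ and $T=S_b$, and pull the conclusions back to $A$. Set $q_j=S_{p_j}\in\mathfrak{B}(A)$ for $1\le j\le n$. Since $S$ is a unital algebra isomorphism onto $S_A$, we have $q_iq_j=S_{p_ip_j}=\delta_{ij}q_i$ and $\sum_j q_j=S_e=I$, so the hypotheses of Theorem \ref{1.2} hold for $T=S_b$ and the family $q_1,\dots,q_n$.

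\emph{Equivalence of} (i) \emph{and} (ii). Because $S$ is linear, multiplicative and injective, the identity $b=\sum_j\lambda_jp_j$ holds if and only if $S_b=\sum_j\lambda_jq_j$. Likewise
\[
S\Bigl(\prod_{j=1}^n(b-\lambda_je)\Bigr)=\prod_{j=1}^n(S_b-\lambda_jI),
\]
so $\prod_{j=1}^n(b-\lambda_je)=0$ if and only if $\prod_{j=1}^n(S_b-\lambda_jI)=0$. In the same way, $p_i=\prod_{j\ne i}\frac{b-\lambda_je}{\lambda_i-\lambda_j}$ if and only if $q_i=\prod_{j\ne i}\frac{S_b-\lambda_jI}{\lambda_i-\lambda_j}$. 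When $n=1$, $p_1=e$ if and only if $q_1=I$. The products are finite products of pairwise commuting elements (all are polynomials in $b$), so their order does not matter. Hence (i)$\Leftrightarrow$(ii) in $A$ is exactly (i)$\Leftrightarrow$(ii) of Theorem \ref{1.2} for $S_b$.

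\emph{The spectral statement.} I would argue directly in $A$ rather than compare $\sigma_A(b)$ with $\sigma_{\mathfrak{B}(A)}(S_b)$; comparing these spectra is the only potential obstacle, and the direct argument avoids it. The computations in the last part of the proof of Theorem \ref{1.2} use only the relations $p_ip_j=\delta_{ij}p_i$ and $\sum_jp_j=e$, so they carry over verbatim.

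\begin{itemize}
\item[(a)] For $\mu\notin\{\lambda_1,\dots,\lambda_n\}$, the element $\sum_j(\lambda_j-\mu)^{-1}p_j\in A$ is a two-sided inverse of $b-\mu e=\sum_j(\lambda_j-\mu)p_j$. Hence $\sigma(b)\subset\{\lambda_1,\dots,\lambda_n\}$.
\item[(b)] Suppose every $p_k$ is non-zero, and suppose for contradiction that $b-\lambda_ke$ is invertible in $A$ for some $k$. Multiplying $\prod_{j=1}^n(b-\lambda_je)=0$ by $(b-\lambda_ke)^{-1}$, using commutativity, gives $\prod_{j\ne k}(b-\lambda_je)=0$. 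By (ii) this forces $p_k=0$, a contradiction. Hence $\sigma(b)=\{\lambda_1,\dots,\lambda_n\}$.
\end{itemize}

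In the real case the same argument applies with real scalars throughout.
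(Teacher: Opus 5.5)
Your proof is correct. For (i)$\Leftrightarrow$(ii) you follow the paper. You transfer through the isometric isomorphism $S$ of Lemma \ref{2.1} and apply the operator statement; the paper quotes Corollary \ref{1.3} and you quote Theorem \ref{1.2}, which are interchangeable here.

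The spectral assertions are where you take a genuinely different route.

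\begin{itemize}
\item[] \textbf{The paper's route.} It reads the spectral claims off Corollary \ref{1.3}, getting $\sigma_{\mathfrak{B}(A)}(S_b)\subset\{\lambda_1,\dots,\lambda_n\}$, with equality when all $p_j\neq 0$. It must then transport this to $\sigma(b)=\sigma_{S_A}(S_b)$ via Lemma \ref{2.3}. That step tacitly uses the fact that a compact subset of $\C$ with finite boundary has empty interior. Hence $\sigma_{S_A}(S_b)=\partial\sigma_{S_A}(S_b)\subset\sigma_{\mathfrak{B}(A)}(S_b)$, and the reverse inclusion is automatic.
\item[] \textbf{Your route.} You repeat the two short computations from the end of the proof of Theorem \ref{1.2} inside $A$. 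This is legitimate because they use only $p_ip_j=\delta_{ij}p_i$, $\sum_jp_j=e$ and (ii).
\end{itemize}

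What your route buys is a self-contained, purely algebraic argument. It needs neither Lemma \ref{2.3} nor the topological remark. It also covers the real case, where the paper's appeal to Lemma \ref{2.3} (stated for complex algebras) is not directly justified. If the spectrum of a real algebra is understood as a subset of $\C$ via complexification, your inverse $\sum_j(\lambda_j-\mu)^{-1}p_j$ works verbatim for non-real $\mu$ too, so read ``real scalars throughout'' with that in mind. The paper's route buys only brevity, since everything is derived from the operator result.

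A minor simplification of your step (b), which also handles $n=1$ uniformly: $(b-\lambda_ke)p_k=\sum_j(\lambda_j-\lambda_k)p_jp_k=0$. So if $b-\lambda_ke$ is invertible, then $p_k=0$ at once.
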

\begin{proof}
    Applying the map $S\colon A\to \mathfrak{B}(A)$, 
    we can rewrite 
    (i) and (ii)  by 
    \begin{itemize}
     \item[(i)'] $S_b=\sum_{j=1}^n\lambda_jS_{p_j}$,
    \item[(ii)'] $\prod_{j=1}^n(S_b-\lambda_jS_e)=0$ and $S_{p_i}=\prod_{j\ne i}\frac{S_b-\lambda_jS_e}{\lambda_i-\lambda_j}$ for every $1\le i\le n$ if $n>1$ and $S_{p_1}=I$ if $n=1$. 
    \end{itemize}
By Corollary \ref{1.3}, we have (i)' and (ii)' are equivalent. 
Hence, (i) and (ii) are equivalent.
In this case, $\sigma_{\mathfrak{B}(A)}(S_b)\subset \{\lambda_1,\dots,\lambda_n\}$ by Corollary \ref{1.3}. By Lemmata \ref{2.2}, \ref{2.3} and \ref{2.1}, we have 
$\sigma(b)=\sigma_{S_A}(S_b)\subset \{\lambda_1,\dots,\lambda_n\}$.
Furthermore, if any $p_1,\dots,p_n$ is non-zero, then 
$\sigma_{\mathfrak{B}(A)}(S_b)=\{\lambda_1,\dots, \lambda_n\}$ 
by Corollary \ref{1.3}. By Lemmata \ref{2.2}, \ref{2.3} and \ref{2.1}, we have $\sigma(b)=\sigma_{S_A}(S_b)=\{\lambda_1,\dots, \lambda_n\}$.
\end{proof}
\section{A theorem of Koehler and Rosenthal for doubly power-bounded elements in Banach algebras}\label{s4}
The following is a version of a theorem of Koehler and Rosenthal \cite{KR,oj1} for Banach algebras.

\begin{theorem}\label{3.1}
    Let $B$ be a unital complex Banach algebra. Suppose that an invertible element $b\in B$ is doubly power-bounded. 
    Suppose that $\lambda$ is an isolated point in $\sigma(b)$. 
Then there exists an idempotent $p\in B$ such that
\[
bp=\lambda p.
\]
\end{theorem}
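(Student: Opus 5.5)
The plan is to obtain $p$ as the Riesz idempotent of $b$ at the isolated point $\lambda$, and then to show that $bp=\lambda p$ using Gelfand's theorem \cite[Satz 1]{gelfand} (see also \cite[Theorem 2.3]{oj1}) applied to a suitable operator on a Banach space. The conclusion is trivial with $p=0$, so the content of the statement is that $p$ can be taken non-zero; I will aim for $p\neq 0$ throughout.

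\emph{Construction of $p$.} Since $\lambda$ is isolated in $\sigma(b)$, choose a small circle $\Gamma$ around $\lambda$ that separates $\lambda$ from $\sigma(b)\setminus\{\lambda\}$. Put
\[
p=\frac{1}{2\pi i}\int_\Gamma (\mu e-b)^{-1}\,d\mu .
\]
By the holomorphic functional calculus, $p$ is an idempotent commuting with $b$ and with $b^{-1}$. Moreover $p\neq 0$ because $\lambda\in\sigma(b)$. By Lemma \ref{2.2}, $|\lambda|=1$, so in particular $\lambda\neq 0$.

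\emph{Spectrum of $bp$ in the corner algebra.} Consider the corner algebra $pBp$, a closed subalgebra with unit $p$. I expect this step to be the main technical obstacle. The point to check is that $\sigma_{pBp}(bp)=\{\lambda\}$. For $\mu\neq\lambda$, let $f_\mu$ be the function that equals $(\mu-z)^{-1}$ near $\lambda$ and $0$ near $\sigma(b)\setminus\{\lambda\}$. Then $f_\mu(b)\in pBp$, and the functional calculus gives
\[
f_\mu(b)(\mu p-bp)=(\mu p-bp)f_\mu(b)=p .
\]
Hence $\sigma_{pBp}(bp)\subset\{\lambda\}$. This spectrum is non-empty because $pBp\neq\{0\}$. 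Also, $bp$ is invertible in $pBp$ with inverse $b^{-1}p$, and
\[
(bp)^n=b^np\quad\text{for all } n\in\Z .
\]
Therefore $\sup_n\|(bp)^n\|\le\|p\|\sup_n\|b^n\|<\infty$, i.e.\ $bp$ is doubly power-bounded in $pBp$.

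\emph{Reduction to Gelfand's theorem for operators.} Let $X=pB$, a closed subspace of $B$ (it is the range of the bounded idempotent $x\mapsto px$). Define $T\colon X\to X$ by $Tx=\lambda^{-1}bx$; this maps $X$ into $X$ since $bp=pb$. The operator $T$ is invertible on $X$ with inverse $x\mapsto\lambda b^{-1}x$, and $\sup_{n\in\Z}\|T^n\|\le\sup_n\|b^n\|<\infty$. For $\mu\neq 1$, left multiplication by $\lambda f_{\lambda\mu}(b)$ maps $X$ to $X$. Since $f_{\lambda\mu}(b)(\lambda\mu p-bp)=p$ and $px=x$ for $x\in X$, this map is a two-sided inverse of $\mu I-T$ on $X$. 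Hence $\sigma(T)\subset\{1\}$, and $\sigma(T)=\{1\}$ since $X\neq\{0\}$ (it contains $p$). Gelfand's theorem now gives $T=I$ on $X$. Applying this to $x=p$ gives $\lambda^{-1}bp=p$, that is, $bp=\lambda p$, with $p$ a non-zero idempotent.
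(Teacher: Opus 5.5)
Your proof is correct, but it takes a different route from the paper.

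\textbf{The paper's route.} It transfers the problem to operators through the left regular representation $S\colon B\to\mathfrak{B}(B)$. Lemma~\ref{2.3} shows that $\lambda$ remains an isolated point of $\sigma_{\mathfrak{B}(B)}(S_b)$. The paper then takes the Riesz projection $Q$ of the operator $S_b$ at $\lambda$ and quotes the operator form of the Koehler--Rosenthal theorem \cite[Theorem 3.2]{oj1} to get $S_b|_{Q(B)}=\lambda I$. Finally it pulls back: the resolvents $(S_b-wI)^{-1}$ lie in the closed subalgebra $S_B$, so $Q\in S_B$ and $Q=S_p$ for an idempotent $p\in B$.

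\textbf{Your route.} You form the Riesz idempotent directly in $B$ and restrict $\lambda^{-1}S_b$ to the closed subspace $X=pB$. You then compute $\sigma(\lambda^{-1}S_b|_X)=\{1\}$ by hand with the holomorphic functional calculus of $B$; the identity $f_{\lambda\mu}(b)(\lambda\mu p-bp)=p$ is exactly what this needs. The conclusion follows from Gelfand's theorem, which the operator Koehler--Rosenthal theorem generalizes.

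\textbf{What each buys.} Your version avoids Lemma~\ref{2.3}, the comparison of spectra in $S_B$ and $\mathfrak{B}(B)$, and the check that $Q\in S_B$. The cost is that you carry out the spectral computation on the Riesz range yourself. The paper's version is shorter given \cite{oj1}, and it illustrates the transfer technique the paper reuses in Theorem~\ref{3.2}. Your proof also makes explicit that $p\neq 0$, which is what gives the statement content. The paper's $p$ is nonzero too, since $Q\neq 0$, but it leaves this implicit.

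\textbf{A small remark.} In your corner-algebra paragraph, only two facts are used later: $f_\mu(b)\in pBp$ and $f_\mu(b)(\mu p-bp)=(\mu p-bp)f_\mu(b)=p$. The claims about $\sigma_{pBp}(bp)$ and the double power-boundedness of $bp$ in $pBp$ are not needed for the operator step.
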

\begin{proof}
Since $S\colon B\to S_B$ is defined by $S(x)=S_x$ and is an isometrical isomorphism by Lemma \ref{2.1}, 
we have $\sigma_{S_B}(S_b)=\sigma(b)$ and $\sup_{n\in \Z}\|S_b^n\|=\sup_{n\in \Z}\|b^n\|<\infty$. As $\lambda$ is an isolated point in $\sigma(b)=\sigma_{S_B}(S_b)$, 
$\lambda$ is in the boundary of $\sigma_{S_B}(S_b)$.  
Since $S_B$ is a unital closed subalgebra of $\mathfrak{B}(B)$, Lemma \ref{2.3} implies that $\lambda \in \sigma_{\mathfrak{B}(B)}(S_b)$. Moreover, since $\sigma_{\mathfrak{B}(B)}(S_b)\subset \sigma_{S_B}(S_b)$, $\lambda$ is an isolated point of $\sigma_{\mathfrak{B}(B)}(S_b)$.
Suppose that $\Gamma$ is a Cauchy contour in the resolvent set $\C\setminus \sigma_{S_B}(S_b)$ of 
$S_b$ around $\lambda$ separating $\lambda$ from $\sigma_{S_B}(S_b)\setminus \{\lambda\}$. 
As $\C\setminus \sigma_{S_B}(T_b)\subset \C\setminus \sigma_{\mathfrak{B}(B)}(S_b)$, we have that $\Gamma$ is also a Cauchy contour in the resolvent set 
$\C\setminus \sigma_{\mathfrak{B}(B)}(S_b)$. As $\lambda$ is isolated, we may suppose that $\Gamma$ separates $\lambda$ from $\sigma_{\mathfrak{B}(B)}(S_b)$. 
Let $Q$ be the Riesz projection corresponding to $\lambda$ defined by 
    \[
    Q=\frac{1}{2\pi i}\int_\Gamma(S_b-wI)^{-1}dw.
    \]
    Note that $Q\in \mathfrak{B}(B)$. 
    Please refer to \cite{GGK} for properties of the Riesz projection.
By \cite[Theorem 3.2]{oj1} 
\begin{equation}\label{eq1}
    S_b|Q(B)=\lambda I_{Q(B)}, 
\end{equation} where $I_{Q(B)}$ is the identity map on $Q(B)$. 
As $(S_b-wI)^{-1}\in S_B$ for $w\in \Gamma$, we have $Q\in S_B$.
By the definition of $S_B$, there is $p\in B$ such that $S_p=Q$. As $Q$ is a projection in the sense that $Q=Q^2$, we have $p=p^2$ by Lemma \ref{2.1}; $p$ is an idempotent in $B$. Rewriting \eqref{eq1} we have
\[
bp=\lambda p.
\]
\end{proof}
\section{Representation of doubly power-bounded elements with finite spectrum in Banach algebras: a generalization of a theorem of Gelfand}\label{s5}
Following the definition in the case of operators, we define the Riesz projections in Banach algebras. 
Recall that $B$ denotes a unital complex Banach algebra with the unit $e$. 
\begin{definition}
Let 
$a\in B$. Suppose that $\lambda$ is an isolated point in $\sigma(a)$. We call
\[
p=\frac{1}{2\pi i}\int_\Gamma(a-wI)^{-1}dw 
\]
the Riesz projection corresponding to $\lambda$, where 
$\Gamma$ is a Cauchy contour in the resolvent set $\C\setminus \sigma(a)$ around $\lambda$ separating $\lambda$ from ${\sigma{(a)}}\setminus \{\lambda\}$. 
\end{definition}
Note that the Riesz projection does not depend on the choice of a Cauchy contour. 

The following is 
a characterization of doubly power-bounded elements with finite spectrum, which is 
a generalization of a theorem of Gelfand \cite[Satz 1]{gelfand}.  Note that the corresponding result for operators is exhibited in \cite{oj1}. 
\begin{theorem}\label{3.2} 
Suppose that $b\in B$ is invertible and {{$\{\lambda_1,\dots, \lambda_n\}\subset \C$}}. The following are equivalent.
\begin{itemize}
    \item[(i)] $b$ is doubly power-bounded {{and $\sigma(b)=\{\lambda_1,\dots,\lambda_n\}$}};
    \item[(ii)] $\{\lambda_1,\dots,\lambda_n\}\subset \T$, and there exist {{non-zero}} idempotents $p_1,\dots, p_n\in B$ such that $\sum_{j=1}^np_j=e$ and $p_ip_j=\delta_{ij}p_i$ for $1\le i,j\le n$ which satisfy that 
    \[
    b=\sum_{j=1}^n\lambda_jp_j;
    \]
    \item[(iii)] $\{\lambda_1,\dots,\lambda_n\}\subset \T$, and $b$ is an algebraic element in the sense that 
    \[
    \prod_{j=1}^n(b-\lambda_je)=0,
    \]
    and we have
    \[
    0\ne p_i=\prod_{j=1,j\ne i}^n\frac{b-\lambda_je}{\lambda_i-\lambda_j}
    \]
    for $1\le i\le n$ if $n>1$ and $p_1=e$ if $n=1$.
\end{itemize}
In this case, if $n>1$, then $p_i$ is the Riesz projection corresponding to $\lambda_i$ for every $1\le i\le n$. 
\end{theorem}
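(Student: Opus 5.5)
The plan is to prove the chain (i)$\Rightarrow$(ii)$\Rightarrow$(i) and then obtain (ii)$\Leftrightarrow$(iii) directly from Corollary \ref{3.4}. We may assume the $\lambda_j$ are distinct, since $\sigma(b)=\{\lambda_1,\dots,\lambda_n\}$ is listed as a set.

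\textbf{(ii)$\Leftrightarrow$(iii).} This is exactly Corollary \ref{3.4}, together with the nonvanishing of the $p_i$.

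\textbf{(ii)$\Rightarrow$(i).} By Corollary \ref{3.4}, $\sigma(b)=\{\lambda_1,\dots,\lambda_n\}$ because every $p_j\ne0$. Since $p_ip_j=\delta_{ij}p_i$, we have $b^m=\sum_j\lambda_j^m p_j$ for all $m\ge0$. Because $|\lambda_j|=1$, the element $\sum_j\lambda_j^{-1}p_j$ is an inverse of $b$, so the same formula holds for all $m\in\Z$. Hence $\|b^m\|\le\sum_j\|p_j\|$ for every $m\in\Z$, and $b$ is doubly power-bounded.

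\textbf{(i)$\Rightarrow$(ii).} Lemma \ref{2.2} gives $\{\lambda_j\}\subset\T$. If $n=1$, take $p_1=e$; the task is then to show $b=\lambda_1 e$, which is the Gelfand case.

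In general I would pass to $S_b$ on the Banach space $B$. By Lemma \ref{2.1}, $S_b$ is doubly power-bounded. Exactly as in the proof of Theorem \ref{3.1}, $\sigma_{\mathfrak B(B)}(S_b)=\sigma(b)$: the boundary of a finite set is the set itself, so Lemma \ref{2.3} gives one inclusion, and the other inclusion is automatic. Let $Q_i$ be the Riesz projection of $S_b$ at $\lambda_i$, computed with contours $\Gamma_i$ around each point. As in Theorem \ref{3.1}, the resolvents lie in $S_B$, so $Q_i=S_{p_i}$, where $p_i$ is the Riesz projection of $b$ at $\lambda_i$ in the sense of the Definition.

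Standard Riesz calculus, applied in $B$ itself via the holomorphic functional calculus, gives:
\begin{itemize}
\item $p_ip_j=\delta_{ij}p_i$;
\item $\sum_j p_j=e$, since the union of the $\Gamma_j$ is homologous to a large circle;
\item $p_j\ne0$, since $\sigma(bp_j|_{p_jBp_j})=\{\lambda_j\}$ is nonempty. Equivalently, if $p_j=0$ the functional calculus would remove $\lambda_j$ from the spectrum.
\end{itemize}
By \cite[Theorem 3.2]{oj1}, which is the key ingredient already used in Theorem \ref{3.1}, we get $S_b|_{Q_j(B)}=\lambda_jI$. Evaluating at $p_j=Q_j(e)$ gives $bp_j=\lambda_jp_j$. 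Summing over $j$ yields $b=b\sum_jp_j=\sum_j\lambda_jp_j$, which is (ii). The case $n=1$ is the same argument with $p_1=e$.

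\textbf{Final claim.} Once the $p_i$ are built as Riesz projections, the uniqueness in (iii) via Corollary \ref{3.4} shows that any family as in (ii) coincides with the Riesz projections. More directly, $p_i=\prod_{j\ne i}(b-\lambda_je)/(\lambda_i-\lambda_j)$ equals the Riesz projection $\frac1{2\pi i}\int_{\Gamma_i}(we-b)^{-1}\,dw$. This follows from a residue computation using $b=\sum_k\lambda_kp_k$: the resolvent is $\sum_k(w-\lambda_k)^{-1}p_k$, and integrating over $\Gamma_i$ picks out $p_i$. (The paper writes $(a-wI)^{-1}$, which differs by a sign; I would match whichever convention makes this consistent.)

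\textbf{Main obstacle.} The substantive step is $bp_j=\lambda_jp_j$, i.e.\ that the quasinilpotent part vanishes. This is handled by the cited operator result \cite[Theorem 3.2]{oj1}, that is, the Koehler--Rosenthal/Gelfand argument. Everything else is bookkeeping with Riesz projections and Corollary \ref{3.4}.
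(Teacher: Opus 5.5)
Your proposal is correct and follows essentially the same route as the paper: pass to $S_b$, use Lemma \ref{2.3} to get $\sigma_{\mathfrak{B}(B)}(S_b)=\sigma(b)$, pull the Riesz projections back through $S$, and handle (ii)$\Rightarrow$(i) and (ii)$\Leftrightarrow$(iii) with Corollary \ref{3.4}. The differences are minor: you apply \cite[Theorem 3.2]{oj1} at each $\lambda_j$ and evaluate at $e$ instead of citing \cite[Corollary 3.3]{oj1}, and you explicitly check $p_j\neq 0$, the negative powers of $b$, and the sign convention in the Riesz integral, points the paper passes over.
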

\begin{proof}
We prove (i)$\Rightarrow$(ii).
First, Lemma \ref{2.2} ensures that $\sigma(b)\subset \T$.
Recall that $S\colon B\to \mathfrak{B}(B)$ is defined as $S(x)=S_x$ for $x\in B$, where $S_x(y)=xy$ for $y\in B$.
    In a similar way to the proof of Theorem \ref{3.1}, we have 
    $\sigma(b)=\sigma_{S_B}(S_b)$. As each $\lambda_i$, $1\le i\le n$,  is in the boundary of $\sigma_{S_B}(b)$, we have that $\sigma_{S_B}(S_b)=\sigma_{\mathfrak{B}(B)}(S_b)$ by Lemma \ref{2.3}. Then by \cite[Corollary 3.3]{oj1}, we observe that 
    \begin{equation}\label{eq2}
    S_b=\sum_{j=1}^n\lambda_jQ_j,
    \end{equation}
    where $Q_j$ is the Riesz projection corresponding to $\lambda_j$ for every $1\le j \le n$ such that $I=\sum_{j=1}^nQ_j$ and $Q_iQ_j=0$ for every pair $(i,j)$ with $i\ne j$. In a similar way to the proof of Theorem \ref{3.1}, we see that $Q_j\in S_B$ for every $j=1,\dots, n$. 
    Let $p_j$ be a Riesz projection corresponding to $\lambda_j$.
    As $S$
    is an isometrical algebra isomorphism (Lemma \ref{2.1}), we see that 
    $S_{p_i}=Q_j$ for every $1\le i\le n$ and 
    $\sum_{j=1}^np_j=e$ and $p_ip_j=\delta_{ij}p_i$ for every $1\le i,j\le n$. Rewriting \eqref{eq2} we observe
    \[
    b=\sum_{j=1}^n\lambda_jp_j.
    \]

    A proof of (ii)$\Rightarrow$(i).  Since 
    \[
    \|b^n\|=\|\sum_{j=1}^n\lambda_j^np_j\|\le \sum_{j=1}^n\|p_j\|
    \]
    for every $n\in \Z$, we have that $b$ is doubly power-bounded.  By Corollary \ref{3.4} we have that $\sigma(b)=\{\lambda_1,\dots, \lambda_n\}$. 
    
    By Corollary \ref{3.4} we have (ii) implies (iii). 
    In particular,  we also have $p_i=\prod_{j=1,j\ne i}^n\frac{b-\lambda_je}{\lambda_i-\lambda_j}$ for $1\le i\le n$ if $n>1$ and $p_1=I$ if $n=1$.

    (iii)$\Rightarrow$(ii). Suppose that (iii) is satisfied. Letting  
    $p_i=\prod_{j=1,j\ne i}^n\frac{b-\lambda_je}{\lambda_i-\lambda_j}$ for $1\le i\le n$ if $n>1$, we obtain (ii) by Corollary \ref{3.4}. If $n=1$, then (ii) is trivial.   
\end{proof}

\begin{corollary}
    Suppose that an invertible element $u\in B$ satisfies 
    \[
    \|u^{-1}\|=\|u\|=1.
    \]
    Suppose that $\sigma(u)=\{\lambda_1,\dots, \lambda_n\}$.  
Then there exist idempotents $p_1,\dots, p_n\in B$ such that  $\sum_{j=1}^np_j=e$ and $p_ip_j=\delta_{ij}p_i$ for $1\le i,j\le n$ which satisfy
\[
u=\sum_{j=1}^n\lambda_jp_j.
\]
\end{corollary}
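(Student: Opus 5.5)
This is a direct consequence of Theorem \ref{3.2}, via the implication (i)$\Rightarrow$(ii). The plan is to check that $u$ satisfies hypothesis (i) of Theorem \ref{3.2}, that is, that $u$ is doubly power-bounded with $\sigma(u)=\{\lambda_1,\dots,\lambda_n\}$. The second part is assumed, so only double power-boundedness needs verification.

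For $n\ge 0$, submultiplicativity of the norm gives
\[
\|u^n\|\le \|u\|^n=1 \quad\text{and}\quad \|u^{-n}\|=\|(u^{-1})^n\|\le \|u^{-1}\|^n=1.
\]
For $n=0$ we have $u^0=e$. If one wants to avoid assuming $\|e\|=1$, note that $\|e\|$ is a finite constant in any case. Either way,
\[
\sup_{n\in\Z}\|u^n\|\le \max\{1,\|e\|\}<\infty,
\]
so $u$ is doubly power-bounded in the sense of \eqref{power}.

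Theorem \ref{3.2} then yields non-zero idempotents $p_1,\dots,p_n$ with $\sum_{j=1}^n p_j=e$ and $p_ip_j=\delta_{ij}p_i$, such that $u=\sum_{j=1}^n\lambda_jp_j$. As a by-product, Lemma \ref{2.2} also gives $\lambda_j\in\T$ for each $j$. The argument has no real obstacle; the only point to state carefully is the $n=0$ term, handled as above.
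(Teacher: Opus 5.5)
Your proof is correct and follows the paper's route: you check that $u$ is doubly power-bounded and then invoke the implication (i)$\Rightarrow$(ii) of Theorem \ref{3.2}. The only difference is cosmetic. The paper proves the equality $\|u^{n}\|=1$ for all $n\neq 0$ by an induction that uses both $\|u\|=1$ and $\|u^{-1}\|=1$, whereas you rightly note that the upper bound $\|u^{\pm n}\|\le 1$ from submultiplicativity is already enough.
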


\begin{proof}
    We prove that $\|u^n\|=1$ for every positive integer $n$ by induction. Suppose that $\|u^k\|=1$ for a positive integer $k$. Then 
    \begin{multline*}
    1=\|u^k\|=\|u^{-1}u^{k+1}\|\le \|u^{-1}\|\|u^{k+1}\|\\
    =\|u^{k+1}\|\le \|u\|\|u^k\|=\|u^k\|=1.
    \end{multline*}
Thus $\|u^{k+1}\|=1$. Therefore we have $\|u^n\|=1$ for every positive integer $n$. We have $\|u^{-n}\|=1$ for every positive integer $n$ in the same way. It follows that $u$ is doubly power-bounded.  Then by Theorem \ref{3.2} we have the conclusion.    
\end{proof}

\begin{corollary}
       Suppose that $a\in B$ is periodic in the sense that $a^m=e$ for a positive integer $m$. 
  Then there exist idempotents $p_1,\dots, p_m \in B$ with $\sum_{j=1}^m p_j=e$ and $p_ip_j=\delta_{ij}p_i$ for $1\le i,j\le m$ which satisfy
\[
a=\sum_{j=1}^m e^{2\pi ji/m} p_j.
\] 
 Note that $p_k=0$ if $e^{2\pi ki/m} \notin \sigma(a)$.
\end{corollary}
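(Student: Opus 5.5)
Since $a^m=e$, the powers of $a$ cycle through $e,a,\dots,a^{m-1}$, so $\sup_{n\in\Z}\|a^n\|\le \max_{0\le k\le m-1}\|a^k\|<\infty$, where $a^{-1}=a^{m-1}$. Hence $a$ is invertible and doubly power-bounded. Write $\omega_j=e^{2\pi ji/m}$ for $1\le j\le m$. These are the $m$ distinct $m$-th roots of unity, so $\lambda^m-1=\prod_{j=1}^m(\lambda-\omega_j)$, and therefore
\[
\prod_{j=1}^m(a-\omega_je)=a^m-e=0 .
\]

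Rather than passing through $\sigma(a)$ and Theorem \ref{3.2}, which would require every listed point to lie in the spectrum, I would define the idempotents directly by the Lagrange formula:
\[
p_k=\prod_{j=1,j\ne k}^m\frac{a-\omega_je}{\omega_k-\omega_j}\qquad (1\le k\le m),
\]
with $p_1=e$ when $m=1$. The point to be verified is that the $p_k$ satisfy $p_ip_j=\delta_{ij}p_i$ and $\sum_k p_k=e$, even though some $p_k$ may vanish. This is the only place where Corollary \ref{3.4} cannot be quoted verbatim, because its equivalence presupposes the orthogonality relations. I would check the relations by polynomial algebra in the commutative subalgebra generated by $a$:
\begin{itemize}
\item[(a)] The Lagrange polynomials $L_k$ satisfy $\sum_k L_k=1$ identically, since the sum has degree less than $m$ and equals $1$ at all $m$ nodes. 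This gives $\sum_k p_k=e$.
\item[(b)] For $i\ne k$, the product $L_iL_k$ is divisible by $\prod_j(\lambda-\omega_j)$, so $p_ip_k=0$ by the first display.
\item[(c)] Idempotence follows from (a) and (b): $p_k=p_k\sum_i p_i=p_k^2$.
\end{itemize}

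Alternatively, one could invoke Theorem \ref{1.1} for $S_a$ on $B$ to get $B=\oplus_j\ke(S_a-\omega_jI)$, define the $q_j$ as the associated projections, and argue as in the proof of Theorem \ref{1.2} and Corollary \ref{3.4}. The identity $(a-\omega_ke)p_k=0$ is immediate, since it is a multiple of $\prod_j(a-\omega_je)$. Summing over $k$ gives
\[
a=a\sum_k p_k=\sum_k\omega_kp_k ,
\]
which is the desired representation.

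Finally, for the remark about vanishing idempotents, I would split into two cases. If $p_k\ne0$, then $ap_k=\omega_kp_k$ gives $(a-\omega_ke)p_k=0$ with $p_k\ne 0$, so $a-\omega_ke$ is not invertible and $\omega_k\in\sigma(a)$. Conversely, if $\omega_k\notin\sigma(a)$, then $a-\omega_ke$ is invertible. Multiplying $(a-\omega_ke)p_k=0$ on the left by its inverse gives $p_k=0$. This is exactly the stated note.

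The main obstacle is mild: it is the bookkeeping of zero idempotents, which keeps Theorem \ref{3.2}, with its non-zero $p_i$ and its exact spectrum, from applying directly. The Lagrange-polynomial verification in (a)–(c) handles it.
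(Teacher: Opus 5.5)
Your proof is correct, and it takes a genuinely different route from the paper.

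The paper treats the statement as a corollary of Theorem \ref{3.2}. It observes that $a$ is doubly power-bounded and uses the spectral mapping theorem to get $\sigma(a)\subset\{e^{2\pi ki/m}\colon 1\le k\le m\}$. It then applies (i)$\Rightarrow$(ii) of Theorem \ref{3.2} to the actual spectrum, which gives non-zero orthogonal idempotents indexed by $\sigma(a)$. Finally it pads with $p_k=0$ for the remaining roots of unity, so the closing note holds by construction.

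You instead define all $m$ idempotents at once as Lagrange polynomials in $a$. You check $\sum_kp_k=e$, $p_ip_k=0$ for $i\ne k$, $p_k^2=p_k$ and $ap_k=\omega_kp_k$ using only $\prod_j(a-\omega_je)=a^m-e=0$, and you derive the vanishing note afterwards.

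What each route buys:
\begin{itemize}
\item Your argument is purely algebraic. It never uses double power-boundedness (that observation in your first paragraph plays no role), nor Riesz projections, Lemma \ref{2.3}, or the operator result from \cite{oj1} on which Theorem \ref{3.2} rests. It applies verbatim to any element of a unital complex algebra annihilated by a polynomial with distinct roots.
\item The paper's route is shorter once Theorem \ref{3.2} is available. It also identifies the non-zero $p_k$ as Riesz projections of $a$ when $\sigma(a)$ has more than one point.
\end{itemize}
You can recover the paper's extra information too. Discarding the zero $p_k$ and applying Corollary \ref{3.4} gives $\sigma(a)=\{\omega_k\colon p_k\ne 0\}$. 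The explicit formula in Corollary \ref{3.4}(ii) then shows that your non-zero $p_k$ coincide with the paper's.

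One wording slip: the two implications in your last paragraph are contrapositives of each other, not converses. Only that single implication is needed for the note. The genuine converse, $\omega_k\in\sigma(a)\Rightarrow p_k\ne0$, follows from Corollary \ref{3.4} as just described.
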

\begin{proof} 
The element $a$ is obviously doubly power-bounded. 
    By the spectrum mapping theorem, we have 
    $\sigma(a)\subset \{e^{2\pi ki/m}\colon 1\le k\le m\}$. 
    Put $\sigma(a)=\{e^{2\pi k_1i/m},\dots, e^{2\pi k_ni/m}\}$. 
    Then Theorem \ref{3.2} asserts that there exist idempotents $p_{k_1},\dots, p_{k_n}\in B$ with $\sum_{l=1}^np_{k_l}=e$ and $p_{k_l}p_{k_s}=\delta_{ls}p_{k_l}$ for $1\le l,s\le n$ which satisfy 
    \begin{equation}\label{541}
    a=\sum_{l=1}^ne^{2\pi k_li/m}p_{k_l}.
    \end{equation}
    If $e^{2\pi ki/m}\not\in \sigma(a)$, then put $p_k=0$. Rewriting \eqref{541}, we have the conclusion.
\end{proof}
\section{
Doubly power-bounded elements characterize commutativity?
}\label{s7}
We denote the set of all doubly power-bounded elements in $B$ by $\pb$.
Let $U=\{a\in B^{-1}\colon \|a^{-1}\|=\|a\|=1\}$. The set $U$ coincides with the unitary group for a unital $C^*$-algebra.  
By a simple calculation, we have
\[
U\subset \{a^{-1}ua\colon a\in B^{-1}, u\in U\}\subset \pb\subset \{a\in B\colon \sigma(a)\subset \T\},
\]
for a general unital Banach algebra. If $B$ is commutative, then $U=\{a^{-1}ua\colon a\in B^{-1}, u\in U\}$. 
If $B$ is a uniform algebra, then $U=\{a\in B\colon \sigma (a)\subset \T\}$. (Suppose that $\sigma(a)\subset \T$. By the spectral mapping theorem, we have that $\sigma(a^{-1})\subset \T$. As the spectral norm coincides with the original one for a uniform algebra, we infer that $\|a\|=1$ and $\|a^{-1}\|=1$. 
Conversely, suppose that $\|a\|=\|a^{-1}\|=1$. Then we have $\sigma(a)\subset \D$, where $\D$ denotes $\{z\in \C\colon |z|\le 1\}$. We also have that $\sigma(a^{-1})\subset \D$. By the spectral mapping theorem, we have $\sigma(a)\subset \{z\in \C\colon |z|\ge1\}$. Thus we see that $\sigma(a)\subset \T$.)
Hence, $U=\pb=\{a\in B\colon \sigma(a)\subset \T\}$ if $B$ is a uniform algebra.

For a locally compact group $G$, we denote $B(G)$ by the Fourier-Stieltjes algebra on $G$, which is defined as the linear span of all continuous positive definite functions on $G$ and can be identified with the dual space of the group $C^*$-algebra $C^*(G)$. 
 We denote $\widehat{G}$ the set of all continuous and multiplicative map $\gamma\colon G\to \T$, that is, $\widehat{G}$ denotes the set of all continuous characters on $G$. 
 A characterization of doubly power bounded elements in $B(G)$ can be reformulated by a theorem of Kaniuth and \"Ulger \cite[Theorem 4.5]{ku}; a function $u\in B(G)$ is doubly power bounded if and only if there is a finite number of open cosets $F_1,\dots, F_m$ of $G$, disjoint open subgroups $H_1,\dots, H_m$ of $G$, $a_1,\dots, a_m\in G$  with $F_k=a_kH_k$ and $\cup_{k=1}^mF_k=G$, a character $\gamma_k$ on $H_k$ for every $k=1,\dots, m$, and unimodular constants $\lambda_1,\dots, \lambda_m$ such that 
 $u=\sum_{k=1}^m\lambda_k1_{F_k}L_{a_k}\gamma_k$. 
 Restating \cite[Corollary 4.6]{ku} we have that if $G$ is connected, then $\pb=\{\alpha\gamma\colon \alpha\in \T,\, \gamma\in \widehat{G}\}=U$, where $\widehat{G}$ denotes the set of all characters on $G$. 
 Let  $B(\T)$ be the Wiener algebra. 
 Note that the M\"obius transformation $f(z)=(2z-1)/(2-z)$ in $B(\T)$ satisfies $\sigma(f)= \T$ and $\|f\|=2$  since $f(z)=-\frac12+\frac{3}{2}\sum_{n=1}^\infty\frac{1}{2^n}z^n$. Hence, we have $U=\pb\subsetneq\{f\in B(\T):\sigma(f)\subset \T\}$.
 In general, we have the following without assuming the connectivity of $G$.
\begin{theorem}
    Suppose that $G$ is a locally compact group. Then, 
    \[
    \{u\in \pb\colon\|u\|=1\}
    =\{\alpha \gamma \colon \alpha\in \T,\,\,\gamma\in \widehat{G}\}
    =U.
    \]
    \end{theorem}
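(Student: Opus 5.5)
We prove the cycle of inclusions
\[
\{\alpha\gamma\colon \alpha\in\T,\ \gamma\in\widehat G\}\subset U\subset \{u\in\pb\colon \|u\|=1\}\subset \{\alpha\gamma\colon \alpha\in\T,\ \gamma\in\widehat G\}.
\]
Throughout we use standard facts about $B(G)$: it is a commutative unital Banach algebra with unit the constant function $1$ and $\|1\|=1$; its norm dominates the sup norm; and for a continuous positive definite function $\varphi$ we have $\|\varphi\|_{B(G)}=\varphi(e_G)$.

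For the first inclusion, let $\gamma\in\widehat G$ and $\alpha\in\T$. Then $\gamma$ is positive definite with $\gamma(e_G)=1$, so $\|\gamma\|=1$. The inverse $\gamma^{-1}=\overline{\gamma}$ is also a character, so $\|\gamma^{-1}\|=1$. Multiplying by $\alpha$ and $\alpha^{-1}$ leaves both norms unchanged, hence $\alpha\gamma\in U$.

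For the second inclusion, take $u\in U$. Since $\|u\|=\|u^{-1}\|=1$ and $B(G)$ is submultiplicative, $\|u^n\|\le 1$ for every $n\in\Z$. Therefore $u\in\pb$ and $\|u\|=1$.

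The third inclusion is the real content. Let $u\in\pb$ with $\|u\|=1$. By the Kaniuth--Ülger description quoted above, there are cosets $F_k=a_kH_k$ partitioning $G$, characters $\gamma_k$ on $H_k$, and unimodular $\lambda_k$ with $u=\sum_k\lambda_k1_{F_k}L_{a_k}\gamma_k$. In particular $|u|\equiv1$ on $G$. Put $\alpha=u(e_G)\in\T$ and $v=\bar\alpha u$, so $\|v\|=1$ and $v(e_G)=1$.

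The key step, which I expect to be the main obstacle, is to show that $v$ is positive definite. The approach is via duality: $B(G)=C^*(G)^*$, and $v$ attains its norm at an approximate unit, since $v(e_G)=1=\|v\|$. A norm-one functional on a $C^*$-algebra that attains its norm on the positive part is positive, giving $v\in P(G)$ (this is the standard characterization $\|\varphi\|=\varphi(e)$ if and only if $\varphi$ is positive definite, Eymard). Then $v$ is a positive definite function with $v(e_G)=1$ and $|v(x)|=1$ for all $x$.

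It remains to see that such a $v$ is a character. The $2\times2$ positive semidefinite matrices built from the points $e_G, x$ and $x,y$ give $v(x^{-1})=\overline{v(x)}$. The $3\times 3$ matrix on $\{e_G,x^{-1},y\}$ has rows of unimodular entries and unit diagonal. Positivity of its determinant (or the Cauchy--Schwarz equality case in the GNS representation, where $\pi(x)\xi$ and $\xi$ are unit vectors with $|\langle\pi(x)\xi,\xi\rangle|=1$, forcing $\pi(x)\xi=v(x)\xi$) yields $v(xy)=v(x)v(y)$. The GNS route is the cleanest: $\xi$ is a common eigenvector, so $v(xy)=\langle\pi(x)\pi(y)\xi,\xi\rangle=v(x)v(y)$. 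Continuity of $v$ is automatic, so $v=\gamma\in\widehat G$ and $u=\alpha\gamma$, closing the cycle.

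The hypothesis $\|u\|=1$ alone (with $|u|=1$ from double power-boundedness) is what drives the argument. So even if the Kaniuth--Ülger step is avoided, one still needs $|u|\equiv1$, which follows because $\sup_n\|u^n\|_\infty<\infty$ over $n\in\Z$ forces $|u(x)|=1$ pointwise.
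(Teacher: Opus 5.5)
Your proof is correct and follows essentially the paper's route: the paper also gets $|u|\equiv 1$ (from $\sigma(u)\subset\T$), writes $u(x)=\langle\pi(x)\xi,\eta\rangle$ with unit vectors via Eymard, and uses the equality case of Cauchy--Schwarz to obtain multiplicativity after normalizing $u(e)=1$. Your variant first passes to a positive definite $v$ via the characterization $\|v\|=v(e)$, so that $\xi=\eta$ from the outset, and this only streamlines the same argument.
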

\begin{proof}
Put $\T\widehat{G}=\{\alpha\gamma\colon \alpha\in \T, \,\,\gamma\in \widehat{G}\}$ and recall that $U=\{u\in B(G)^{-1}\colon \|u\|=\|u^{-1}\|=1\}$. 
Obviously, $U\subset \{u\in \pb\colon\|u\|=1\}$. 
    Suppose that $\alpha\in \T$ and $\gamma \in \widehat{G}$. 
    We have
    \[
    \|\gamma\|=\sup_{f\in C^*(G), \|f\|\le 1}\left|\int\gamma(x)f(x)d\mu (x)\right|\le 1.
    \]
    By the theory of commutative Banach algebras, we also have
    \[
    \|\gamma\|\ge r(\gamma)\ge |\gamma(e)|=1, 
    \]
    where $r(\cdot)$ is the spectral radius and $\gamma(e)=1$. Thus $\|\alpha\gamma\|=1$, hence $\|(\alpha\gamma)^{-1}\|=1$. It follows that $\T\widehat{G}\subset U$. 

    Let $u\in \pb$ be such that $\|u\|=1$. 
    By Lemma \ref{2.2}, $\sigma(u)\subset \T$. 
    By a theorem of Eymard \cite[Lemma 2.14]{ey}, there exists a unitary representation $\pi$ of $G$ and $\xi, \eta\in H_{\pi}$ with $\|\xi\|=\|\eta\|=1$ such that 
    \[
    u(x)=\langle\pi(x)\xi,\eta\rangle,\quad x\in G.
    \]
    By the Cauchy-Schwarz inequality, we have
    \[
    1=|u(x)|=|\langle \pi(x)\xi,\eta\rangle|\le \|\pi(x)\xi\|\|\eta\|\le \|\xi\|\|\eta\|=1.
    \]
    It follows that there exists a complex number $\alpha(x)$ such that 
    $\pi(x)\xi=\alpha(x)\eta$, hence we have
    \[
u(x)=\langle\pi(x)\xi,\eta\rangle=\langle\alpha(x)\eta,\eta\rangle=\alpha(x)
    \]
    for every $x$. Thus 
    \begin{multline}\label{720}
    u(xy)=\langle\pi(xy)\xi,\eta\rangle=\langle\pi(x^{-1})^{-1}\alpha (y)\eta,\eta\rangle 
    \\
    =\alpha(y)\langle\pi(x^{-1})^{-1}\eta,\eta\rangle=
    \alpha\langle\alpha(x^{-1})^{-1}\xi,\eta\rangle=
    \alpha(y)\alpha(x^{-1})^{-1}\langle\xi,\eta\rangle
    \end{multline}
    for every pair $x,y\in G$. 
    Then we have
    \begin{equation}\label{721}      u(e)=u(e^{-1}e)=\alpha(e)\alpha(e)^{-1}\langle\xi,\eta\rangle=\langle\xi,\eta\rangle
    \end{equation}
    and 
    \begin{equation}\label{722}    \alpha(x)=u(x)=u(xe)=\alpha(e)\alpha(x^{-1})^{-1}\langle\xi,\eta\rangle
    \end{equation}
    for every $x\in G$. 
    Suppose that $u(e)=1$ first. Then $\alpha(e)=u(e)=1$, and 
    $1=u(e)=\langle \xi,\eta\rangle$ by \eqref{721}. 
    Thus, by \eqref{722} we get $\alpha(x^{-1})=\alpha(x)^{-1}$. 
    By \eqref{720}, we have $u(xy)=u(x)u(y)$ for every pair $x,y\in G$, which ensures that $u$ is a character on $G$; $u\in \T\widehat{G}$. Suppose that $u(e)$ need not be $1$. Put $v=\overline{u(e)}u$. Since $|u|=1$ on $G$, we have $v(e)=1$ and $v\in \pb$. It follows by the previous part that $v\in \T\widehat{G}$, hence $u=u(e)v\in \T\widehat{G}$.  
\end{proof}
Suppose that $B$ is commutative. Then $\pb$ is closed under multiplication since for any $a, b\in \pb$, and an integer $n$, we have $\|(ab)^n\|\le\|a^n\|\|b^n\|$. 
On the other hand, $\pb$ can be closed under multiplication even if $B$ is noncommutative, as the following example shows.
\begin{example}
    Suppose that $n$ is a positive integer greater than   $2$. 
    Let $B$ be a subalgebra of the algebra of all $n\times n$ complex matrices which consists of all upper triangle matrices with identical diagonal entries. Then 
    \[
    \pb=\{\lambda I\colon \lambda\in \T\},
    \]
    where $I$ is the identity matrix. 
    The reason is as follows. Suppose that $M=\lambda I+ N\in \pb$, where $N$ is the nilpotent part of $M$. As the $\sigma(M)\subset \T$, $|\lambda|=1$. For a positive integer $m\ge n$, $M^m=\lambda^m I+ m\lambda^{m-1}N+\cdots + \lambda^{n-1}
    \begin{pmatrix}
m \\
m-n+1 \\
\end{pmatrix}
    N^{n-1}$ since $N^k$ is the zero matrix for $k\ge n$. We easily see that $\|M^n\|\to \infty$ unless $N$ is the zero matrix. It follows that $M=\lambda I$. Conversely, $\lambda I\in \pb$ for $\lambda\in \T$ is clear. Note that $B$ is neither commutative nor semisimple.
\end{example}
\begin{question}\label{7.2}
    Suppose that $B$ is semisimple and $\pb$ for $B$ is closed under multiplication. Does it follow that $B$ is commutative? How about for the case of a unital 
    $C^*$-algebra?
\end{question}

We provide a partial answer to the question. 
Recall that a standard operator algebra is a subalgebra of 
$B(X)$ 
 containing all finite-rank bounded operators on a complex Banach space $X$.
The Toeplitz algebra (generated by a unilateral shift) and the Laurent algebra (generated by a bilateral shift) are typical examples. 
Standard $C^*$-algebras play an essential role in the Brown–Douglas–Fillmore (BDF) theory since extensions of the algebra of compact operators by a commutative $C^*$-algebra is the central concept in the theory. 
A standard $C^*$-algebra on a Hilbert space $H$ contains every finite rank bounded operator on $H$. 
Hence, it is not commutative if the dimension of $H$ is greater than 1. We have the following.

\begin{theorem}\label{7.2}
    Suppose that $A$ is a standard unital $C^*$-algebra such that $A\subset B(H)$ with a Hilbert space $H$ of dimension greater than 1. 
    Then, $\pb$ is not closed under multiplication. 
\end{theorem}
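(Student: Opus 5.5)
Since $A$ contains every finite-rank operator and the unit $I$, it contains all operators of the form $I+F$ with $F$ of finite rank. The plan is to find two such elements in $\pb$ whose product is not in $\pb$. Because $\dim H\ge 2$, we can choose orthonormal vectors $\xi_1,\xi_2\in H$ and write $H=H_0\oplus H_0^\perp$ with $H_0=\mathrm{span}\{\xi_1,\xi_2\}$. Every $2\times 2$ matrix $M$ acting on $H_0$ gives the operator $M\oplus I_{H_0^\perp}=I+(M-I_2)\oplus 0$. This is a finite-rank perturbation of $I$, so it lies in $A$. Since this embedding is isometric and multiplicative, powers are computed blockwise, and the element is doubly power-bounded in $A$ exactly when $M$ is doubly power-bounded as a matrix. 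Note that $\|(M\oplus I)^n\|=\max(\|M^n\|,1)$.

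Next, take two non-orthogonal idempotents $P=\begin{pmatrix}1&0\\0&0\end{pmatrix}$ and $Q=\begin{pmatrix}1&1\\0&0\end{pmatrix}$, and put $a=2P-I_2$ and $b=2Q-I_2$. Each is an involution: $a^2=b^2=I_2$, so $a=a^{-1}$ and $b=b^{-1}$, and all their integer powers are $I_2$ or the element itself. Hence both are doubly power-bounded. This also follows from Theorem \ref{3.2} (ii), since $a=1\cdot P+(-1)(I_2-P)$, and similarly for $b$. Alternatively, the unitary $a$ could be replaced by any non-unitary involution; we only need both factors to be involutions.

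Then compute $ab=\begin{pmatrix}1&2\\0&1\end{pmatrix}$. This is a unipotent, non-identity matrix, and $(ab)^n=\begin{pmatrix}1&2n\\0&1\end{pmatrix}$, so $\|(ab)^n\|\ge 2|n|\to\infty$. Thus $ab\notin\pb$. Equivalently, $\sigma(ab)=\{1\}$ with $ab\ne I$, and Theorem \ref{3.2} with $n=1$ (Gelfand's theorem) excludes it. Lifting back, $(a\oplus I)(b\oplus I)=ab\oplus I$ is unbounded in powers. So $\pb$ for $A$ is not closed under multiplication.

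The main technical point is to make sure the block embedding really stays inside $A$ and that the norms in $A$ are the operator norms. The first holds because $A$ is unital and contains all finite-rank operators. The second holds because $A$ is a $C^*$-subalgebra of $B(H)$, so the norm on $A$ is the operator norm. No other obstacle arises.
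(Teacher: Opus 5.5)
Your proof is correct, but it takes a different and more elementary route than the paper.

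The paper argues by contradiction. It uses that $A$ has trivial commutant, hence acts irreducibly, and applies Kadison's transitivity theorem to get $a\in A$ with $a(x)=x$ and $a(y)=2y$. After rescaling this to an invertible $b$ and choosing a unitary $u\in A$ with $u(x)=y$, closure under multiplication would put $u^{-1}(b^{-1}ub)$ into $\pb$. Yet this element has the eigenvalue $\frac{1+k}{2+k}\notin\T$, contradicting Lemma~\ref{2.2}.

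Your counterexample is structurally of the same kind. Since $b=SaS^{-1}$ for the involution $S=\bigl(\begin{smallmatrix}1&1\\0&-1\end{smallmatrix}\bigr)$, your $ab=a^{-1}SaS^{-1}$ is, like $u^{-1}b^{-1}ub$, a commutator of a unitary with an invertible element. The failure is detected differently, though. Your product is unipotent with $\sigma(ab)=\{1\}\subset\T$, so no spectral argument is available. You use instead the linear growth $\|(ab)^n\|\ge 2|n|$, or equivalently Gelfand's theorem (the case $n=1$ of Theorem~\ref{3.2}).

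What your approach buys is that it uses only $I+F\in A$ for finite-rank $F$. It needs no irreducibility, no transitivity and no $C^*$-structure. So it works verbatim for any unital standard operator algebra on a complex Banach space of dimension at least $2$, after complementing the two-dimensional subspace by a bounded projection. It also shows that even two elements of order two can have a product outside $\pb$.

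What the paper's approach buys is a stronger failure. Its product has spectrum leaving $\T$, so even the larger set $\{c\in A\colon \sigma(c)\subset\T\}$ is not closed under multiplication.

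One minor slip: $M\mapsto M\oplus I$ is multiplicative but not isometric, and your formula $\|(M\oplus I)^n\|=\max(\|M^n\|,1)$ requires $H_0^\perp\ne\{0\}$. Neither point matters. You only use that the integer powers of $a\oplus I$ and $b\oplus I$ are $I$ or the element itself, and that $\|(ab\oplus I)^n\|\ge\|(ab)^n\|$.
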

\begin{proof}
Suppose that $\pb$ is closed under multiplication. 
Since $A$ contains every finite-rank projection, we infer that the commutant of $A$, $\{x\in B(H)\colon xa=ax\}$, coincides with $\C e$. Then \cite[Theorem 4.1.12]{M} asserts that $A$ acts irreducibly on $H$. 
    It is well known that a unital $C^*$-algebra is generated by unitaries. As the dimension of $H$ is greater than 1, there exists linearly independent $x,y\in H$ with $\|x\|=\|y\|=1$ and a unitary element $u\in U$ such that $u(x)=y$. 
 By Kadison's transitivity theorem (cf. \cite[Theorem 5.2.2]{M}), there exists $a\in A$ such that $a(x)=x$ and $a(y)=2y$. Letting $k=2\|a\|\ge 4$,  $b=(a+ke)/(1+k)\in A$ is invertible and $b(x)=x$, $b(y)=(2+k)/(1+k)y$.   
 As $u$ is an surjective isometry on $H$, we have $u\in \pb$, hence, $b^{-1}ub\in \pb$. As we assumed that $\pb$ is closed under muplitplication, we have $u^{-1}b^{-1}ub\in \pb$. Thus $\sigma(u^{-1}b^{-1}ub)\subset \T$ by Lemma \ref{2.2}.  
 By a direct calculation, we infer that $u^{-1}b^{-1}ub(x)=\frac{1+k}{2+k}x$. Thus, $\frac{1+k}{2+k}\in \sigma(u^{-1}b^{-1}ub)$, which is against Lemma \ref{2.2} since $\frac{1+k}{2+k}$ is not unimodular.
\end{proof}

\subsection*{Acknowledgments}
The second-named author was supported by JSPS KAKENHI Grant Numbers JP24K06754. 
This version of the article has been accepted for publication, after peer review but is
not the Version of Record and does not reflect post-acceptance improvements, or any corrections. The Version of Record is
available online at: http://dx.doi.org/[10.1007/s44146-026-00242-9]. Use of this Accepted Version is subject to the publisher’s Accepted
Manuscript terms of use https://www.springernature.com/gp/open-research/policies/accepted-manuscript-terms”.
\subsection*{Declaration}
The authors used ChatGPT (OpenAI) to assist with English-language editing. After using this tool, the authors reviewed and edited the content as needed and take full responsibility for the 
content of the publication.
\subsection*{Data Availability Statement} Data sharing is not applicable to this article as
no datasets were generated or analyzed during the current study.
\subsection*{Conflict of interest} 
On behalf of all authors, the corresponding author states that there is no conflict of interest.

\end{document}